\documentclass[10pt,reqno]{amsart}

\usepackage[T1]{fontenc}
\usepackage{lmodern}
\usepackage{microtype}
\usepackage{mathtools}
\usepackage{amssymb}
\usepackage{graphicx}
\usepackage{enumitem}
\usepackage{placeins}
\usepackage[colorlinks=true,linkcolor=blue,citecolor=blue,urlcolor=blue]{hyperref}

\hypersetup{
  pdftitle={From 12 to 6: Sharpening the Three-Charge Bound in Maxwell's Problem},
  pdfauthor={Andrei Gabrielov, Dmitry Novikov, Tomer Novikov, Boris Shapiro}
}

\numberwithin{equation}{section}
\allowdisplaybreaks
\theoremstyle{plain}
\newtheorem{theorem}{Theorem}[section]
\newtheorem{proposition}[theorem]{Proposition}
\newtheorem{lemma}[theorem]{Lemma}

\theoremstyle{remark}

\newcommand{\R}{\mathbb{R}}
\newcommand{\C}{\mathbb{C}}
\newcommand{\Qfield}{\mathbb{Q}}
\newcommand{\dd}{\,\mathrm{d}}
\newcommand{\NP}{\operatorname{NP}}
\newcommand{\Res}{\operatorname{Res}}
\newcommand{\Vol}{\operatorname{Vol}}
\newcommand{\Qpos}{\R_{>0}^{2}}
\newcommand{\restr}[2]{#1\big|_{#2}}

\title[From 12 to 6]{From 12 to 6: Sharpening the Three-Charge Bound in Maxwell's Problem}

\author[A. Gabrielov]{Andrei Gabrielov}
\address{Department of Mathematics, Purdue University, West Lafayette, Indiana 47907, USA}

\author[D. Novikov]{Dmitry Novikov}
\address{Department of Mathematics, Weizmann Institute of Science, Rehovot, Israel}

\author[T. Novikov]{Tomer Novikov}
\address{Department of Economics, Northwestern University, Evanston, Illinois 60208, USA}

\author[B. Shapiro]{Boris Shapiro}
\address{Department of Mathematics, Stockholm University, SE-106 91 Stockholm, Sweden}

\date{}

\keywords{point charges, equilibrium points, Rolle--Khovanskii theory, Bernstein--Kushnirenko theorem, fewnomials}
\subjclass[2020]{Primary 31B05; Secondary 34C08, 58K05}

\begin{document}

\begin{abstract}
In \cite{GNS} we proved that, for every $\alpha>0$, the potential of
three positive point charges has at most $12$ nondegenerate equilibrium
points.  We also observed that the same method would give the sharper
bound $6$ if a certain auxiliary polynomial system $Q=R=0$ had at least
four solutions, counted with multiplicity, in each open quadrant of the
$(f,g)$-plane.  Here we prove this four-solution statement.  The main new
ingredient is a separation argument at the unique saddle point of a
separated-variable first integral.  Consequently, the upper bound for three charges improves from $12$ to
$6$.
\end{abstract}

\maketitle

\section{Introduction}\label{sec:introduction}

In \S~113 of his treatise \cite{Ma}, J.~C.~Maxwell discussed the
maximal number of isolated equilibrium points generated by point charges
in $\mathbb R^3$ and asserted the bound $(k-1)^2$ for $k$ charges.  The
argument supporting this assertion is incomplete; see the appendix to
\cite{GNS}.\footnote{For historical discussion of contemporary criticism
of inconsistencies and incomplete arguments in Maxwell's treatise, including
comments by J.~J.~Thomson, J.~C.~McConnel, H.~Poincar\'e, and O.~Heaviside,
see \cite[Chapter~6]{Wa}.} We refer to the problem of bounding the number
of equilibrium points of a finite configuration of point charges as
\emph{Maxwell's problem}.

A general finiteness bound for a natural generalization of Maxwell's problem
was established in Theorem~1.5(a) of \cite{GNS} and subsequently improved,
for example, in \cite{Zo}.  Very recently Arathoon, Ball, and Kvalheim
\cite{ABK}, building on ideas from \cite{Edel}, subsequently constructed five positive
charges in $\mathbb R^3$ whose Coulomb potential has at least $24$
nondegenerate critical points.  Thus Maxwell's proposed bound is false in
general.

For three positive charges, Theorem~1.5(b) of \cite{GNS} gives the upper
bound $12$, whereas Maxwell's proposed bound is $4$.  Our goal is to improve
$12$ to $6$.  For three equal charges, the bound $4$ is known \cite{Ts}.

Our set-up is as follows. We may assume that the three charges are noncollinear since the collinear situation is trivial.  In the noncollinear case, place three positive
charges $\zeta_1,\zeta_2,\zeta_3=1$ at $(0,0)$, $(1,0)$, and $(a,b)$,
respectively, with $b\neq0$.  After normalizing the third charge, the
potential has the form
\begin{equation}\label{eq:potential}
  V_{\alpha}=\sum_{i=1}^{3}\zeta_i\rho_i^{-\alpha},
  \qquad
  \rho_i=r_i^2,
  \qquad
  \alpha>0.
\end{equation}
Every critical point lies in the affine plane of the charges; in the
noncollinear case it lies in the open triangle that they span.  As
recalled in \cite{GNS}, Morse-theoretic considerations give the parity
relation
\begin{equation}\label{eq:parity}
  \#\{\text{equilibria}\}=2m_0+2,
\end{equation}
where $m_0$ is the number of planar local minima.  In particular, the
number of nondegenerate equilibria is even.  The main result of this note is as follows. 

\begin{theorem}\label{thm:main}
For every $n\geq2$ and every $\alpha>0$, a potential generated by three
positive point charges in $\R^n$ has at most $6$ nondegenerate equilibrium
points.
\end{theorem}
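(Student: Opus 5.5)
The proof will follow the scheme of \cite{GNS}, with the count sharpened at one place. First we reduce to the plane of the charges, as the introduction indicates. Every critical point of $V_\alpha$ in $\R^n$ lies in the open triangle spanned by the charges. The Hessian in the transverse directions is positive definite there, so nondegeneracy in $\R^n$ is equivalent to nondegeneracy of the restriction to the plane. We may therefore take $n=2$.

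Next we recall the Rolle--Khovanskii argument of \cite{GNS}. Pass to the variables $\rho_1,\rho_2,\rho_3$, the squared distances; in the plane they satisfy a single algebraic (Cayley--Menger type) relation. The critical point equations $\partial_x V_\alpha=\partial_y V_\alpha=0$ then become a system of the form: a Pfaffian (separated-variable) equation in the auxiliary coordinates $(f,g)$, together with polynomial equations. The Khovanskii--Rolle theorem bounds the number of nondegenerate solutions of the Pfaffian system by the number of solutions of an algebraic system. One equation of that system is replaced by the contact condition with the integral curves of the first integral. Bernstein--Kushnirenko bounds this number by the mixed volume of the Newton polygons, and summing gives the earlier bound $12$.

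The sharpening comes from the observation made in \cite{GNS}, which we take as given. The algebraic count includes the solutions of the auxiliary system $Q=R=0$ lying in the open quadrants of the $(f,g)$-plane that do not correspond to equilibria. If each of the relevant open quadrants carries at least four solutions of $Q=R=0$, counted with multiplicity, then subtracting these spurious solutions from the Bernstein--Kushnirenko total brings $12$ down to $6$. The theorem thus reduces to the following claim: in each open quadrant, $Q=R=0$ has at least four solutions counted with multiplicity.

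We intend to prove this claim in three steps.
\begin{enumerate}[label=(\roman*)]
\item Use the separated-variable first integral $H(f,g)=\phi(f)+\psi(g)$ appearing in the construction. Show that within each open quadrant $H$ has exactly one critical point, and that it is a saddle.
\item Interpret $Q=R=0$ as tangency or critical-point conditions relative to the level curves of $H$. The level set through the saddle consists of two curves crossing at the saddle, and these cut the quadrant into four sectors.
\item In each sector, exhibit a solution by a topological degree or intermediate value argument. Control the sign of $R$ along the sector's boundary: the separatrices plus the coordinate axes and infinity, where the asymptotics of $\phi,\psi$ are explicit. Then show that $Q$ changes sign along a curve that crosses from one boundary arc to another. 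This gives one zero per sector, hence four per quadrant. Multiplicity is handled by using local degree, so that degenerate zeros are counted correctly.
\end{enumerate}

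The main obstacle will be step (iii), the separation argument at the saddle. We have to prove that the four sectors are genuinely separated, so that the zeros found in different sectors are distinct. This requires the boundary sign conditions to hold uniformly in the parameters $\alpha>0$, $\zeta_1,\zeta_2>0$ and $(a,b)$. We would first try to obtain these sign conditions from the convexity and monotonicity of $\phi$ and $\psi$. The uniqueness of the saddle in step (i) should follow from these same properties, since $\phi'$ and $\psi'$ are each monotone. Once the claim is established, the count is $6$ nondegenerate equilibria. Combined with the parity relation \eqref{eq:parity}, this is consistent with at most two local minima.
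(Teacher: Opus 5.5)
Your reduction to the plane, your recollection of the two Rolle--Khovanskii steps, the count $N_4\le 28-12-12=4$, $N\le 2+0+4=6$, and your step (i) all agree with the paper. The gap is in step (iii). The claim that each of the four sectors cut out by $\{\Phi=c_0\}$ contains a solution of $Q=R=0$ is false in general.

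The saddle $p$ is itself a solution. Since $\eta_2(p)=0$, the identities $Q\,\dd f\wedge\dd g=fg\xi_1\xi_2\xi_3\,\eta_1\wedge\eta_2$ and $R\,\dd f\wedge\dd g=fg\xi_2\xi_3\,\dd Q\wedge\eta_2$ give $Q(p)=R(p)=0$. But $p$ lies on the separatrix, not in any open sector. In the examples reported in the paper, $Q=R=0$ has exactly four real solutions per quadrant. So at most three of them lie in the four open regions $L$, $\mathcal R$, $\{\Phi<c_0,\ g>g_0\}$ and $\{\Phi<c_0,\ g<g_0\}$, and some sector contains none. Geometrically, the component of $\Gamma$ through $p$ passes through $p$ from one sector to the opposite one, and closes up through only one of the two remaining sectors. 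Hence no boundary-sign or local-degree argument can give a zero in every sector, and the uniform sign conditions you hope to obtain from the monotonicity of $u$ and $s$ cannot hold.

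The missing idea is to count on the curve $\Gamma$, not in the sectors.

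\begin{enumerate}[label=(\arabic*),leftmargin=2em]
\item Let $O$ be the component of $\Gamma\cap\mathcal Q_\sigma$ through $p$. The identity $Q=-\frac{1+2\alpha}{(\alpha+1)^2}\xi_1\xi_2\xi_3-fgQ_1$ makes $O$ compact and keeps it off the axes.
\item For generic parameters $O$ is a smooth oval. Along a regular parametrization, $R(\gamma(t))=\rho(t)\,(\Phi\circ\gamma)'(t)$ with $\rho$ nowhere zero.
\item So the solutions on $O$ are exactly the critical points of $h=\Phi|_O$, counted with multiplicity, and $p$ is one of them.
\item On a circle, $h$ has a distinct maximum and minimum and an even number of odd-order critical points. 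Hence total multiplicity below $4$ forces exactly two simple critical points, one of which is $p$.
\item Suppose $p$ is the minimum. Then $O\setminus\{p\}$ is a connected subset of $\{\Phi>c_0\}$, so by the separation lemma it lies entirely in $L$ or entirely in $\mathcal R$.
\item But $v_0^{\mathsf T}Hv_0>0$ with $H=\operatorname{diag}(+,-)$ forces the tangent $v_0$ at $p$ to have nonzero $f$-component. So the two ends of the arc leave $p$ on opposite sides of $\{f=f_0\}$, a contradiction. The maximum case is the same, using $\{g=g_0\}$.
\end{enumerate}

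Thus the separation lemma is used to obstruct the oval, not to place four zeros in four sectors.

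You also omit the genericity step. Finiteness of $Q=R=0$ and smoothness of $\Gamma$ along $O$ hold only on a dense set of parameters. The bound for arbitrary parameters then follows because nondegenerate equilibria persist under small perturbations. As a minor point, the transverse Hessian is negative definite, not positive definite. Block-diagonality still gives the equivalence of nondegeneracy you need.
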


The proof retains the two Rolle--Khovanskii reductions and the
mixed-volume count from \cite{GNS}.  The only new step is a four-contact
lemma for a compact component of the auxiliary contact curve.  We first
recall the reduction and isolate the statement that must be proved.

\section{Reduction of the three-charge problem}\label{sec:reduction}

Following \cite{GNS}, introduce
\begin{equation}\label{eq:fg}
  f=\left(\frac{\rho_1}{\rho_2}\right)^{\alpha+1},
  \qquad
  g=\left(\frac{\rho_1}{\rho_3}\right)^{\alpha+1}.
\end{equation}
Critical points of $V_{\alpha}$ correspond bijectively to intersections
in the positive quadrant $\Qpos$ of the curves
\begin{align}
  \gamma_1
    &=\left\{
      f^{1/(\alpha+1)}\frac{\xi_2}{\xi_1}=1
      \right\},
      \label{eq:gamma1}\\
  \gamma_2
    &=\left\{
      g^{-1/(\alpha+1)}\xi_2
      =f^{-1/(\alpha+1)}\xi_3
      \right\},
      \label{eq:gamma2}
\end{align}
where
\begin{align}
  \xi_1(f,g)
    &=(ag+\zeta_2f)^2+b^2g^2,
    \label{eq:xi1}\\
  \xi_2(g)
    &=((a-1)g-\zeta_1)^2+b^2g^2,
    \label{eq:xi2}\\
  \xi_3(f)
    &=((a-1)\zeta_2f+a\zeta_1)^2
      +b^2(\zeta_2f+\zeta_1)^2.
    \label{eq:xi3}
\end{align}
The form $\xi_1$ is positive definite, and $\xi_2,\xi_3$ are strictly
positive quadratic polynomials on the real line.

The curves $\gamma_1$ and $\gamma_2$ are integral curves of closed
logarithmic one-forms
\begin{equation}\label{eq:eta12}
  \eta_1=\dd\log\Xi,
  \qquad
  \eta_2=\dd\log\Psi,
\end{equation}
where $\Xi$ and $\Psi$ are defining functions for \eqref{eq:gamma1} and
\eqref{eq:gamma2}.  They are separating solutions in Khovanskii's sense,
so the Rolle--Khovanskii theorem applies twice; see
\cite{GNS,Khovanskii}.

Let
\begin{equation}\label{eq:N}
  N=\#(\gamma_1\cap\gamma_2\cap\Qpos).
\end{equation}
For generic parameters, all intersections considered below are isolated and
are counted with their local intersection multiplicities.  The first
Rolle--Khovanskii step gives
\begin{equation}\label{eq:first-rolle}
  N\leq N_1+N_2,
\end{equation}
where $N_1=2$ is the number of unbounded components of $\gamma_2$ in
$\Qpos$ and
\begin{equation}\label{eq:N2}
  N_2=\#(\gamma_2\cap\Gamma\cap\Qpos).
\end{equation}
Here $\Gamma=\{Q=0\}$ is the contact curve defined by
\begin{equation}\label{eq:def-Q}
  Q\,\dd f\wedge\dd g
  =fg\,\xi_1\xi_2\xi_3\,\eta_1\wedge\eta_2.
\end{equation}
Its Newton polygon is
\begin{equation}\label{eq:NPQ}
  \NP(Q)=\{(p,q):2\leq p+q\leq6,\ 0\leq p,q\leq4\}.
\end{equation}

The second Rolle--Khovanskii step gives
\begin{equation}\label{eq:second-rolle}
  N_2\leq N_3+N_4,
\end{equation}
where $N_3=0$ and
\begin{equation}\label{eq:N4}
  N_4=\#\bigl(\Gamma\cap\{R=0\}\cap\Qpos\bigr).
\end{equation}
The equality $N_3=0$ follows because $\Gamma$ has no unbounded
component and its only point on the coordinate axes is the origin, which
is isolated in $\Gamma$.  The polynomial $R$ is defined by
\begin{equation}\label{eq:def-R}
  R\,\dd f\wedge\dd g
  =fg\,\xi_2\xi_3\,\dd Q\wedge\eta_2.
\end{equation}

After replacing $R$ by a reduction $\widetilde R=R-qQ$, the
Bernstein--Kushnirenko theorem gives
\begin{equation}\label{eq:BKK}
  \#\{Q=\widetilde R=0\text{ in }(\C^*)^2\}
  \leq
  2\Vol\bigl(\NP(Q),\NP(\widetilde R)\bigr)
  =28.
\end{equation}
The count is taken with intersection multiplicity; see
\cite{Bernstein,GNS}.  Two nonreal conjugate common zeros of
$\xi_1,\xi_2,\xi_3$ contribute multiplicity at least $6$ each.  The proof
in \cite{GNS} also finds at least two real solutions of $Q=R=0$ in each
of the three open quadrants different from $\Qpos$.  Hence
\begin{equation}\label{eq:old-bound}
  N_4\leq28-12-6=10,
  \qquad
  N\leq2+0+10=12.
\end{equation}

The improvement rests on the following statement.

\begin{proposition}[Four-solution proposition]\label{prop:four-solutions}
Fix an open quadrant
\[
  \mathcal Q_{\sigma}=I\times J,
  \qquad
  I,J\in\{(0,\infty),(-\infty,0)\}.
\]
Let $p$ be the unique zero of $\eta_2$ in this quadrant, whose existence
is established in Lemma~\ref{lem:saddle}, and let $O$ be the component of
$\Gamma\cap\mathcal Q_{\sigma}$ containing $p$.  Assume that $Q=R=0$
has finitely many solutions in $(\C^*)^2$ and that $\Gamma$ is
nonsingular along $O$.  Then the system $Q=R=0$ has total intersection
multiplicity at least $4$ in $\mathcal Q_{\sigma}$.
\end{proposition}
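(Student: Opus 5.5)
Since $\Gamma$ has no unbounded component and meets the axes only at the isolated point $0$, the component $O$ is a compact smooth closed curve (an oval) inside $\mathcal Q_{\sigma}$. The plan is to find at least four points of $O$ where $R$ vanishes, or where $R$ changes sign, and to count multiplicities so that the total is at least $4$.

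\textbf{Geometric meaning of $R$ on $O$.} By \eqref{eq:def-R}, on $O$ the function $R$ is, up to the nonvanishing factor $fg\,\xi_2\xi_3$, the contraction of $\eta_2$ with the tangent vector of $O$. Equivalently, if $\Psi$ is the separated-variable first integral of $\eta_2$, then $R|_O$ vanishes exactly at the critical points of $\Psi|_O$, that is, at the points where $O$ is tangent to a level curve of $\Psi$. The form $\eta_2$ is closed and $O$ is a closed curve, so $\oint_O\eta_2=0$ and $\Psi|_O$ is a well-defined smooth function.

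\textbf{Counting critical points of $\Psi|_O$.} Away from $p$, the function $\Psi|_O$ already has a maximum and a minimum on the compact curve $O$. This gives at least two zeros of $R|_O$, and a sign change of $R$ along $O$ at each strict extremum. To get four we use the saddle point. By Lemma~\ref{lem:saddle}, $\Psi$ has a unique critical point $p$ in $\mathcal Q_{\sigma}$, which is a nondegenerate saddle. The level set $\{\Psi=\Psi(p)\}$ is locally two transversal branches through $p$. Because $\Psi(f,g)=\phi(f)+\psi(g)$ has separated variables, each branch is a graph in suitable coordinates. Together they cut the quadrant into four sectors, alternately with $\Psi>\Psi(p)$ and $\Psi<\Psi(p)$.

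The key separation step is as follows. Since $p\in O$ and $O$ is smooth there, $O$ passes through $p$ from one sector into the opposite one, unless it is tangent to a branch (this would mean $p$ is a zero of $R$, and is handled as a degenerate case). Its tangent line at $p$ lies in a pair of opposite sectors. Closing up, $O$ must return and therefore cross the separatrix level set elsewhere. We will show that $O\setminus\{p\}$ visits both a region where $\Psi>\Psi(p)$ and a region where $\Psi<\Psi(p)$, in a way that forces $\Psi|_O-\Psi(p)$ to change sign at least four times counting the double crossing at $p$. The restriction $\Psi|_O$ has a nondegenerate critical point at $p$ itself, because $d\Psi(p)=0$, so $p$ is a zero of $R$. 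Moreover, $\Psi|_O-\Psi(p)$ vanishes to order exactly $2$ at $p$ with a definite sign near $p$ along $O$, or to higher order if $O$ is tangent to a branch. Combined with the global max and min and with the sign pattern dictated by the sectors, we get at least four critical points of $\Psi|_O$ counted with multiplicity. Concretely: if $\Psi|_O\ge\Psi(p)$ near $p$, then $p$ is a local min of $\Psi|_O$. A closed curve through the opposite sectors must also enter the sector where $\Psi<\Psi(p)$, which yields a global min $\ne p$ and therefore an additional local max between them on each arc. The extrema then alternate, giving $\ge 4$.

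\textbf{Multiplicities.} The local intersection multiplicity of $Q=R=0$ at a point of $O$ equals the order of vanishing of $R|_O$, since $\Gamma$ is smooth along $O$. That order is the order of the critical point of $\Psi|_O$ minus one. Summing over the alternating extrema gives total multiplicity $\ge4$, and degenerate tangencies only increase it.

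\textbf{Main obstacle.} The hardest step is the separation argument: proving that $O$ must reach both $\{\Psi>\Psi(p)\}$ and $\{\Psi<\Psi(p)\}$ away from $p$. One must rule out that $O$ stays in the closure of a single pair of sectors, touching $p$ only tangentially. I would first try to use the separated structure of $\Psi$ together with the Jordan curve theorem: $O$ bounds a disk, and the separatrix branches through $p$ run to the boundary of the quadrant, since $\Psi$ has no other critical points. Hence each branch enters the disk bounded by $O$ and must exit it, so it crosses $O$ at least twice. This forces the required alternation.
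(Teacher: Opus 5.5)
Your proposal is correct in substance, but it handles the decisive separation step differently from the paper. The paper first uses parity: the odd-order zeros of $h'=(\restr{\Phi}{O})'$ are exactly its sign changes, so there are evenly many. This reduces everything to one case: exactly two simple critical points, the global minimum and maximum, with $p$ one of them. That case is then ruled out by the trivial inequality $\Phi(f_0,g)\le c_0$. If $p$ is the minimum, the connected arc $O\setminus\{p\}$ lies in $\{\Phi>c_0\}$, which misses the line $\{f=f_0\}$. Yet its two ends approach $p$ from opposite sides of that line, because $v_0^{\mathsf T}Hv_0>0$ forces $v_0$ to have a nonzero $f$-component.

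You instead use the global shape of the separatrix $\{\Phi=c_0\}$ (your $\Psi$ is the paper's $\Phi$). It consists of four arcs from $p$ that leave every compact subset of the quadrant, which the separated form $u(f)+s(g)$ makes explicit. You combine this with the Jordan curve theorem. When $O$ is transverse to both branches at $p$, exactly one half-branch of each points into the disk bounded by $O$. Each must meet $O$ again, at points $q_1\neq q_2$ different from $p$. The trade-off is this: the paper needs only one line on which $\Phi\le c_0$ and nothing about the global separatrix, at the cost of the parity case analysis. Your route needs that global structure, but once finished properly it gives four \emph{distinct} critical points directly.

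Several steps in your write-up need tightening.

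\textbf{Max and min.} The claim that $\restr{\Phi}{O}$ has its maximum and minimum ``away from $p$'' is not known a priori. It is essentially what has to be proved, since the paper's only hard case is $p$ being the global extremum.

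\textbf{From the separatrix to $\{\Phi<c_0\}$.} Your ``Concretely'' step needs $O$ to enter $\{\Phi<c_0\}$. That does not follow at once from $O$ meeting the separatrix at $q_1,q_2$, because the contact could be tangential.

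\textbf{Finishing by Rolle.} The cleanest finish, which you only gesture at with ``forces the required alternation'', is Rolle's theorem on the circle. The function $\Phi-c_0$ vanishes on $O$ at $p,q_1,q_2$, so each of the three complementary open arcs contains a critical point. Together with $p$, this gives four distinct zeros of $R$ on $O$.

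\textbf{Degenerate case.} If $O$ is tangent to one branch at $p$, the other branch is still transverse and yields one point $q\neq p$ on $O\cap\{\Phi=c_0\}$. Rolle then gives two further critical points, and $p$ itself has multiplicity at least $2$, so the total is again at least $4$. Equivalently: a multiplicity-$2$ zero of $h'$ is not an extremum, so the maximum and minimum lie elsewhere. This is the short justification your remark that degenerate tangencies ``only increase'' the count requires.
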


Assuming Proposition~\ref{prop:four-solutions} for generic parameters,
the three quadrants outside $\Qpos$ contribute at least $12$ to the
left-hand side of \eqref{eq:BKK}.  Therefore
\begin{equation}\label{eq:new-N4}
  N_4\leq28-12-12=4,
\end{equation}
and \eqref{eq:first-rolle}--\eqref{eq:second-rolle} give
\begin{equation}\label{eq:generic-six}
  N\leq2+0+4=6.
\end{equation}
Sections~\ref{sec:saddle} and \ref{sec:oval} prove
Proposition~\ref{prop:four-solutions}; Section~\ref{sec:genericity}
justifies the genericity assumptions and completes the proof of
Theorem~\ref{thm:main}.

\section{A separated saddle in every quadrant}\label{sec:saddle}

The one-form $\eta_2$ separates in the variables $f$ and $g$:
\begin{equation}\label{eq:eta2-separated}
  \eta_2=P(f)\,\dd f+S(g)\,\dd g,
\end{equation}
where
\begin{equation}\label{eq:P-S}
  P(f)=-\frac{1}{(\alpha+1)f}+\frac{\xi_3'(f)}{\xi_3(f)},
  \qquad
  S(g)=\frac{1}{(\alpha+1)g}-\frac{\xi_2'(g)}{\xi_2(g)}.
\end{equation}
Write
\[
  \xi_3(f)=Af^2+Bf+C,
  \qquad
  \xi_2(g)=\widehat A g^2+\widehat B g+\widehat C,
\]
where $A,C,\widehat A,\widehat C>0$.  Clearing denominators in
\eqref{eq:P-S} gives
\begin{equation}\label{eq:pi-sigma-quotients}
  P(f)=\frac{\pi(f)}{(\alpha+1)f\xi_3(f)},
  \qquad
  S(g)=-\frac{\sigma(g)}{(\alpha+1)g\xi_2(g)},
\end{equation}
with
\begin{align}
  \pi(f)
    &=(\alpha+1)f\xi_3'(f)-\xi_3(f)
      =(2\alpha+1)Af^2+\alpha Bf-C,
      \label{eq:pi}\\
  \sigma(g)
    &=(\alpha+1)g\xi_2'(g)-\xi_2(g)
      =(2\alpha+1)\widehat A g^2+\alpha\widehat B g-\widehat C.
      \label{eq:sigma}
\end{align}
The product of the two roots of $\pi$ is
$-C/((2\alpha+1)A)<0$.  Thus $\pi$ has one simple positive root and one
simple negative root.  The same argument applies to $\sigma$.

\begin{lemma}\label{lem:saddle}
On every open quadrant $I\times J$, the form $\eta_2$ has a
single-valued real-analytic primitive
\begin{equation}\label{eq:Phi}
  \Phi(f,g)=u(f)+s(g).
\end{equation}
This primitive has exactly one critical point $p=(f_0,g_0)$ in the
quadrant, and $p$ is a nondegenerate saddle.  Moreover, $u$ is a
one-well function: it decreases strictly to $f_0$, increases strictly
after $f_0$, and tends to $+\infty$ at both ends of $I$.  Similarly,
$s$ is a one-hill function and tends to $-\infty$ at both ends of $J$.
\end{lemma}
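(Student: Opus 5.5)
Since $\eta_2=P(f)\,\dd f+S(g)\,\dd g$ with $P$ depending only on $f$ and $S$ only on $g$, and both coefficients are real-analytic on each open quadrant (the denominators $f\xi_3(f)$ and $g\xi_2(g)$ do not vanish there, since $\xi_2,\xi_3>0$ on $\R$), we define $u$ as a primitive of $P$ on $I$ and $s$ as a primitive of $S$ on $J$. Both are single-valued because $I$ and $J$ are intervals. Then $\Phi=u(f)+s(g)$ satisfies $\dd\Phi=\eta_2$. Explicitly, up to additive constants,
\[
  u(f)=\log\xi_3(f)-\tfrac{1}{\alpha+1}\log|f|,
  \qquad
  s(g)=\tfrac{1}{\alpha+1}\log|g|-\log\xi_2(g).
\]
The critical points of $\Phi$ are exactly the pairs $(f_0,g_0)$ with $P(f_0)=0$ and $S(g_0)=0$. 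By \eqref{eq:pi-sigma-quotients} this means $\pi(f_0)=0$ and $\sigma(g_0)=0$. We have already shown that $\pi$ has exactly one simple root of each sign, so it has exactly one simple root $f_0\in I$; likewise $\sigma$ has exactly one simple root $g_0\in J$. This gives existence and uniqueness of $p$.

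Next we determine the monotonicity of $u$ by reading off the sign of $u'=P=\pi/((\alpha+1)f\xi_3)$ on $I$.
\begin{itemize}
\item For $I=(0,\infty)$: the denominator is positive. Since $\pi(0)=-C<0$ and $\pi$ has positive leading coefficient, $\pi<0$ on $(0,f_0)$ and $\pi>0$ beyond $f_0$. So $u$ decreases and then increases.
\item For $I=(-\infty,0)$: the denominator is negative. Here $\pi>0$ on $(-\infty,f_0)$ and $\pi<0$ on $(f_0,0)$, so $u'$ is negative and then positive. Again $u$ decreases and then increases.
\end{itemize}
Because $f_0$ is a simple root of $\pi$, we get $u''(f_0)=P'(f_0)>0$. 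For the boundary behaviour:
\begin{itemize}
\item at the finite endpoint $f\to0$, $-\frac{1}{\alpha+1}\log|f|\to+\infty$ while $\log\xi_3$ stays bounded;
\item at $|f|\to\infty$, $\log\xi_3\sim2\log|f|$ dominates $\frac1{\alpha+1}\log|f|$, since $2>1/(\alpha+1)$, so again $u\to+\infty$.
\end{itemize}
The same computation with the opposite overall sign in $S=-\sigma/((\alpha+1)g\xi_2)$ shows that $s$ is a one-hill function: $s''(g_0)<0$, and $s\to-\infty$ at $0$ and at $\pm\infty$, because $\frac1{\alpha+1}\log|g|-2\log|g|\to-\infty$.

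Finally, the Hessian of $\Phi$ at $p$ is $\operatorname{diag}(u''(f_0),s''(g_0))$, which has one positive and one negative entry. So $p$ is a nondegenerate saddle. The only step requiring care is the sign bookkeeping on the negative half-lines, where both $f$ and $\pi$ change sign, together with checking the growth comparison $2>1/(\alpha+1)$ at infinity. Neither step is a real obstacle.
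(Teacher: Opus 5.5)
Your proof is correct and follows essentially the same route as the paper. It uses the same sign analysis of $P=\pi/((\alpha+1)f\xi_3)$ on each half-line, the same identification of the critical point with the roots of $\pi$ and $\sigma$, and the same diagonal Hessian. The differences are cosmetic: you read off the boundary behaviour from the explicit primitives $\log\xi_3-\frac{1}{\alpha+1}\log|f|$ and $\frac{1}{\alpha+1}\log|g|-\log\xi_2$, where the paper integrates the asymptotics of $P$; and you get $u''(f_0)>0$ from simplicity of the root plus the sign change of $P$, which is equivalent to the paper's observation that $\pi'(f_0)$ and $f_0$ have the same sign.
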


\begin{proof}
The functions $f\xi_3(f)$ and $g\xi_2(g)$ do not vanish on $I$ and $J$.
Hence $P$ and $S$ admit single-valued real-analytic primitives on these
intervals.

Let $f_-<0<f_+$ be the roots of $\pi$.  On $(0,\infty)$, the denominator
of $P$ is positive, so $P<0$ on $(0,f_+)$ and $P>0$ on $(f_+,\infty)$.
On $(-\infty,0)$, the sign of the factor $f$ reverses the sign of the
denominator; because $\pi$ is positive on $(-\infty,f_-)$ and negative
on $(f_-,0)$, one again has $P<0$ before the root and $P>0$ after it.
Consequently, $u$ decreases and then increases on either interval.
Furthermore,
\begin{align*}
  P(f)&\sim-\frac{1}{(\alpha+1)f}
       &&\text{as }f\to0,\\
  P(f)&\sim\frac{2\alpha+1}{(\alpha+1)f}
       &&\text{as }|f|\to\infty.
\end{align*}
After integration, these asymptotics show that $u(f)\to+\infty$ at both
ends of $I$.

The same sign analysis, applied to the second expression in
\eqref{eq:pi-sigma-quotients}, shows that $s$ first increases and then
decreases; the analogous asymptotics give $s(g)\to-\infty$ at both ends
of $J$.  Hence the unique zeros $f_0$ of $P$ and $g_0$ of $S$ give
the unique critical point of $\Phi$ in the quadrant.

At that point,
\begin{equation}\label{eq:second-derivatives}
  u''(f_0)
  =\frac{\pi'(f_0)}{(\alpha+1)f_0\xi_3(f_0)}>0,
  \qquad
  s''(g_0)<0.
\end{equation}
Indeed, $\pi'(f_0)$ and $f_0$ have the same sign, and the corresponding
statement for $\sigma$ gives the second inequality.  Therefore
\begin{equation}\label{eq:Hessian}
  \operatorname{Hess}_p\Phi
  =\begin{pmatrix}
      u''(f_0)&0\\
      0&s''(g_0)
    \end{pmatrix}
\end{equation}
has signature $(+,-)$.
\end{proof}

Set
\begin{equation}\label{eq:c0H}
  c_0=\Phi(p)=u(f_0)+s(g_0),
  \qquad
  H=\operatorname{diag}\bigl(u''(f_0),s''(g_0)\bigr).
\end{equation}

\begin{lemma}[Separation lemma]\label{lem:separation}
The set $\{\Phi>c_0\}\cap(I\times J)$ has exactly two connected
components,
\begin{equation}\label{eq:LR}
  L=\{f<f_0\}\cap\{\Phi>c_0\},
  \qquad
  \mathcal R=\{f>f_0\}\cap\{\Phi>c_0\}.
\end{equation}
In particular, no connected subset of $\{\Phi>c_0\}$ meets both sides of
the line $\{f=f_0\}$.  Symmetrically, $\{\Phi<c_0\}$ has exactly two
connected components separated by $\{g=g_0\}$.
\end{lemma}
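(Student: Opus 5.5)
The plan is to use the separated structure of $\Phi=u(f)+s(g)$ from Lemma~\ref{lem:saddle}: describe $\{\Phi>c_0\}$ fibre by fibre in $g$, and show that it contains no point of the vertical line $\{f=f_0\}$. For fixed $g\in J$, the slice $\{f\in I:\ u(f)>c_0-s(g)\}$ is controlled by $u$ being one-well. Since $u$ has minimum value $u(f_0)$ and tends to $+\infty$ at both ends of $I$, the slice is all of $I$ when $c_0-s(g)<u(f_0)$. Otherwise it is $I$ minus a closed interval $[a(g),b(g)]\ni f_0$, where $a,b$ are the two solutions of $u=c_0-s(g)$ on either side of $f_0$; the interval degenerates to $\{f_0\}$ when there is equality.

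The key inequality is $s(g)\le s(g_0)$ for all $g\in J$, because $s$ is one-hill with maximum at $g_0$. Hence
\[
c_0-s(g)=u(f_0)+s(g_0)-s(g)\ge u(f_0)
\]
for every $g$. So the first case never occurs, and every slice omits $f_0$. Therefore $\{\Phi>c_0\}\cap\{f=f_0\}=\emptyset$. Indeed, at $f=f_0$ we have $\Phi=u(f_0)+s(g)\le c_0$.

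It follows that $\{\Phi>c_0\}=L\sqcup\mathcal R$, where $L$ and $\mathcal R$ are disjoint relatively open sets. This already proves the ``in particular'' claim: a connected subset of $\{\Phi>c_0\}$ lies in $L$ or in $\mathcal R$.

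It remains to show that $L$ and $\mathcal R$ are each nonempty and connected. Take $\mathcal R$. Since $u$ is strictly increasing on $\{f>f_0\}$, we can write
\[
\mathcal R=\{(f,g):\ g\in J,\ f>b(g)\},\qquad b(g)=(u|_{(f_0,\sup I)})^{-1}\bigl(c_0-s(g)\bigr).
\]
The function $b$ is continuous in $g$, since it is the inverse of a strictly monotone continuous function composed with a continuous one. The fibres over $J$ are nonempty intervals $(b(g),\sup I)$, because $u\to+\infty$ at the end of $I$. So $\mathcal R$ is the region above the graph of a continuous function over an interval. It is homeomorphic to a strip, hence nonempty and connected; one can also join points by vertical segments followed by a path running near the far end of $I$. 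The same argument applies to $L$ using $a(g)$. The one point needing a little care is the continuity of $b$ at $g_0$, where the interval degenerates. This is fine, because the inverse of $u$ restricted to $[f_0,\sup I)$ is continuous on $[u(f_0),\infty)$.

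The symmetric statement follows by exchanging the roles of $u$ and $-s$. Since $-s$ is one-well with minimum at $g_0$ and $u\ge u(f_0)$, the set $\{\Phi<c_0\}=\{-s(g)>u(f)-s(g_0)-u(f_0)\}$ misses the line $\{g=g_0\}$. It then splits into the two connected pieces $g<g_0$ and $g>g_0$ by the same graph argument. I expect no serious obstacle beyond keeping the case $I=(-\infty,0)$ straight, and Lemma~\ref{lem:saddle} already normalizes that case.
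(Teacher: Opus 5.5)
Your proof is correct and follows essentially the same route as the paper. You slice $\{\Phi>c_0\}$ over $g$, use $s(g)\le s(g_0)$ to see that it misses $\{f=f_0\}$, and describe $L$ and $\mathcal R$ as the regions beyond the graph of a continuous function of $g$. The paper then joins points by a horizontal--vertical--horizontal path instead of invoking a strip homeomorphism. Your remark on continuity at $g_0$ is extra care the paper omits: its $\beta$ uses the inverse on an open half-interval and so is not literally defined at $g_0$. One small fix to your alternative path: the first moves should be horizontal, toward the far end of $I$, not vertical.
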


\begin{proof}
On the line $f=f_0$,
\begin{equation}\label{eq:line-separation}
  \Phi(f_0,g)=u(f_0)+s(g)
  \leq u(f_0)+s(g_0)=c_0,
\end{equation}
with equality only at $g=g_0$.  Thus $\{\Phi>c_0\}$ does not meet
$\{f=f_0\}$, and the sets in \eqref{eq:LR} are disjoint open sets whose
union is $\{\Phi>c_0\}$.  Both are nonempty because $u$ tends to
$+\infty$ on either side of $f_0$.

We verify that $L$ is connected.  For fixed $g$, its horizontal slice is
an interval adjacent to the left endpoint of $I$:
\begin{equation}\label{eq:beta}
  \{f\in I:f<f_0,\ u(f)>c_0-s(g)\}
  =\{f\in I:f<\beta(g)\},
\end{equation}
where
\begin{equation}\label{eq:beta-def}
  \beta(g)
  =\left(\restr{u}{I\cap(-\infty,f_0)}\right)^{-1}
     \bigl(c_0-s(g)\bigr).
\end{equation}
The function $\beta$ is continuous.  Given two points of $L$, the
$g$-coordinates between them form a compact interval.  Choose an
$f$-coordinate lying strictly to the left of the minimum of $\beta$ on
that interval.  Moving horizontally to this coordinate, then vertically,
and finally horizontally to the second point produces a path in $L$.
Thus $L$ is connected.  The same argument applies to $\mathcal R$.
Interchanging the roles of the one-well function $u$ and the one-hill
function $s$ proves the assertion for $\{\Phi<c_0\}$.
\end{proof}

\section{The oval and its four contacts}\label{sec:oval}

The following identity is proved in \cite{GNS}:
\begin{equation}\label{eq:GNS-identity}
  Q=-\frac{1+2\alpha}{(\alpha+1)^2}\,\xi_1\xi_2\xi_3-fgQ_1,
\end{equation}
where $\NP(fgQ_1)$ lies strictly inside
$\NP(\xi_1\xi_2\xi_3)$.  It follows that $\Gamma=\{Q=0\}$ has no unbounded component, that
$Q<0$ on each coordinate axis away from the origin, and that the origin is
an isolated point of $\Gamma$.  Since
$\eta_2(p)=0$, equation \eqref{eq:def-Q} also gives $Q(p)=0$.

\begin{lemma}\label{lem:oval}
The component $O$ defined in Proposition~\ref{prop:four-solutions} is
compact and contained in the open quadrant.  Under the assumptions of
Proposition~\ref{prop:four-solutions}, $O$ is a smooth closed curve,
$\restr{\Phi}{O}$ is nonconstant, and the zeros of $R$ on $O$ are exactly
the critical points of $\restr{\Phi}{O}$, with the same multiplicities.
More precisely, if $\gamma$ is a regular parametrization of $O$, then
\begin{equation}\label{eq:R-on-oval}
  R(\gamma(t))=\rho(t)(\Phi\circ\gamma)'(t)
\end{equation}
for a nowhere-vanishing analytic function $\rho$.
\end{lemma}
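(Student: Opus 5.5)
The plan is to prove the four assertions in order: compactness, smoothness, nonconstancy of $\restr{\Phi}{O}$, and then the identity \eqref{eq:R-on-oval}. Throughout I use \eqref{eq:GNS-identity}, which says that $Q$ is dominated near infinity and near the coordinate axes by the negative-definite term $-\frac{1+2\alpha}{(\alpha+1)^2}\xi_1\xi_2\xi_3$.

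\emph{Compactness.} Since $\NP(fgQ_1)$ lies strictly inside $\NP(\xi_1\xi_2\xi_3)$, I will show that $Q<0$ in two regions:
\begin{itemize}
\item outside a large disc, by comparing weighted degrees along every edge direction of the Newton polygon;
\item in a thin neighbourhood of each coordinate axis away from the origin, since $Q<0$ on the axes there.
\end{itemize}
Near the origin, the lowest-order terms of $Q$ (degree $2$, coming from $\xi_1$ times $\xi_2(0)\xi_3(0)>0$) are again negative definite. Hence $\{Q=0\}\cap\overline{\mathcal Q_\sigma}$ is contained in a compact subset $K$ of the open quadrant. The component $O$ is closed in $\mathcal Q_\sigma$ and lies in $K$, so it is compact.

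\emph{Smoothness.} By hypothesis $\Gamma$ is nonsingular along $O$. A compact connected smooth one-dimensional real curve is then a circle, so $O$ is a smooth closed curve with a regular parametrization $\gamma$.

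\emph{Nonconstancy of $\restr{\Phi}{O}$.} Suppose $\Phi\equiv c$ on $O$. Since $p\in O$, we get $c=c_0$, so $O\subset\{\Phi=c_0\}$. Near the saddle $p$, the level set $\{\Phi=c_0\}$ is a union of two transversal analytic arcs crossing at $p$. The smooth curve $O$ must then contain exactly one of these branches through $p$, call it $\beta$.
\begin{itemize}
\item Follow $O$ along $\beta$ away from $p$. It stays in $\{\Phi=c_0\}$, which is a smooth curve away from $p$ because $p$ is the only critical point (Lemma~\ref{lem:saddle}).
\item To close up, the loop must return to $p$ along one of the four half-branches. Being smooth at $p$, $O$ must in fact arrive along the opposite half of $\beta$.
\item Now apply Lemma~\ref{lem:separation}. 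Near $p$ the four sectors cut out by the two branches alternate between $\{\Phi>c_0\}$ (two of them, on opposite sides of $f=f_0$) and $\{\Phi<c_0\}$. The loop $O$ is a Jordan curve through $p$ along $\beta$, so its interior contains exactly one sector of each sign adjacent to $p$, say one in $L$ and one in $\{\Phi<c_0\}$.
\item The interior of $O$ is a bounded region on which $\Phi$ attains interior extrema unless $\Phi$ is constant. Since $p$ lies on the boundary and is the only critical point, $\Phi$ can have no interior maximum or minimum. Yet $\Phi$ takes both values $>c_0$ and $<c_0$ inside while equalling $c_0$ on the boundary, so it has an interior maximum. This is a contradiction.
\end{itemize}
An alternative, more directly tied to the separation lemma, would be to note that a level loop of $\Phi$ avoiding critical points in its interior is impossible. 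The maximum-principle form above is the cleaner version.

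\emph{The identity \eqref{eq:R-on-oval}.} Write $\eta_2=\dd\Phi$ on the quadrant. From \eqref{eq:def-R},
\[
R\,\dd f\wedge\dd g=fg\,\xi_2\xi_3\,\dd Q\wedge\dd\Phi .
\]
Along $O$ we have $\dd Q\neq0$, and $\gamma'$ spans $\ker\dd Q$. Choose $\gamma'=J\nabla Q$, the rotated gradient, which rescales any regular parametrization by a nonvanishing factor. Then $\dd Q\wedge\dd\Phi(\nabla Q,\gamma')=|\nabla Q|^2\,\dd\Phi(\gamma')$, up to sign. This gives
\[
R(\gamma(t))=\pm\frac{fg\,\xi_2\xi_3\,|\nabla Q|^2}{|\nabla Q|^2}\,(\Phi\circ\gamma)'(t)\cdot(\text{reparametrization factor}).
\]
Here $fg\,\xi_2\xi_3\neq0$ in the open quadrant, so $\rho$ is analytic and nowhere vanishing. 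Because $\rho\neq0$, multiplicities of zeros of $R\circ\gamma$ equal those of $(\Phi\circ\gamma)'$. They also agree with the intersection multiplicities of $\{Q=0\}$ and $\{R=0\}$ at smooth points of $\Gamma$, since the intersection multiplicity at a smooth point of $\Gamma$ is the vanishing order of $R$ restricted to the branch.

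I expect the main obstacle to be the nonconstancy step, specifically handling the saddle geometry carefully to exclude $O$ being a level loop through $p$. The remaining steps are routine.
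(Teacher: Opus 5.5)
Your proof is correct, and for compactness, smoothness and the identity \eqref{eq:R-on-oval} it follows the paper. You evaluate $\dd Q\wedge\dd\Phi$ on $(\nabla Q,J\nabla Q)$, where the paper uses $(\gamma',N)$; this is the same computation.

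One small slip: $\{Q=0\}\cap\overline{\mathcal Q_\sigma}$ contains the origin, because $Q$ has no monomials of degree below $2$. What you mean, and what your argument actually shows, is that $\Gamma\cap\mathcal Q_\sigma$ lies in a compact subset of the open quadrant.

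The genuine difference is the nonconstancy of $\restr{\Phi}{O}$.

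\textbf{The paper's route.} It settles this in one line. If $\Phi$ were constant on $O$, then \eqref{eq:R-on-oval} would force $R\equiv0$ on $O$. That gives infinitely many common zeros of $Q$ and $R$, contradicting the finiteness hypothesis of Proposition~\ref{prop:four-solutions}.

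\textbf{Your route.} You argue topologically: a level Jordan curve of $\Phi$ bounds a disc on which $\Phi$, being nonconstant, attains an interior extremum. That extremum would be a critical point in the open disc, whereas the unique critical point $p$ lies on $O$ itself. This is valid, and it buys something. Nonconstancy then holds with no finiteness assumption on $Q=R=0$, using only Lemma~\ref{lem:saddle} and the Jordan curve theorem.

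\textbf{Simplifications and one missing line.} Neither the local sector picture at the saddle nor the separation lemma is needed for this step. Real-analyticity of $\Phi$ on the connected quadrant already makes it nonconstant on any open disc, and the extremum argument does the rest. You should, however, state explicitly that the bounded component of $\R^2\setminus O$ lies in $\mathcal Q_\sigma$, since $\Phi$ is only defined there. This holds because $\R^2\setminus\mathcal Q_\sigma$ is connected, unbounded and disjoint from $O$, so it lies in the unbounded component.

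\textbf{Comparison.} The paper's version is shorter because it reuses the identity \eqref{eq:R-on-oval}, which is needed anyway. Your version is more self-contained and separates the geometry of the oval from the genericity hypotheses.
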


\begin{proof}
The absence of unbounded components makes $O$ bounded.  It cannot
accumulate at a nonzero point of a coordinate axis because $Q$ is
strictly negative there, and it cannot accumulate at the origin because
the origin is an isolated zero of $Q$.  Hence $O$ is compact and lies in
the open quadrant.  The regularity assumption makes it a compact
connected one-dimensional manifold without boundary, and therefore a
smooth closed curve.  If $\restr{\Phi}{O}$ were constant, then
\eqref{eq:R-on-oval} below would imply that $Q$ and $R$ share the whole
curve $O$, contrary to the finiteness assumption.

Let $\gamma$ be a regular analytic parametrization of $O$, and choose a
unit normal field $N$ along it.  Evaluating \eqref{eq:def-R} on
$(\gamma'(t),N(t))$ gives
\begin{equation}\label{eq:wedge-on-oval}
 R(\gamma(t))\,\dd f\wedge\dd g(\gamma'(t),N(t))
 =-fg\xi_2\xi_3\,\dd Q(N(t))\,(\Phi\circ\gamma)'(t),
\end{equation}
up to the fixed orientation sign.  The factors
$\dd f\wedge\dd g(\gamma',N)=\pm\lvert\gamma'\rvert$,
$\dd Q(N)=\pm\lvert\nabla Q\rvert$, and $fg\xi_2\xi_3$ are all
nonzero.  This proves \eqref{eq:R-on-oval}, including equality of the
vanishing orders.
\end{proof}

The saddle $p\in O$ is a critical point of $\restr{\Phi}{O}$ because
$\dd\Phi(p)=0$.  If $v_0$ is a nonzero tangent vector to $O$ at $p$, then
$p$ is a simple zero of $(\Phi\circ\gamma)'$ precisely when
\begin{equation}\label{eq:tangent-Hessian}
  v_0^{\mathsf T}Hv_0\neq0.
\end{equation}

\begin{lemma}[Four-contact lemma]\label{lem:four-contacts}
The total multiplicity of the zeros of $R$ on $O$ is at least $4$.
\end{lemma}

\begin{proof}
The function $h=\restr{\Phi}{O}$ is a nonconstant real-analytic function
on a circle.  Its critical points are isolated.  A zero of odd order of
$h'$ is exactly a point at which $h'$ changes sign; hence the number of
odd-order zeros of $h'$ around the circle is even.  Moreover, $h$ attains
its minimum and maximum at two distinct critical points.

Suppose, for a contradiction, that the total multiplicity of the zeros
of $h'$ is at most $3$.  Since $p$ is one of these zeros and there are at
least two distinct critical points, only the following possibilities can
occur:
\begin{enumerate}[label=\textup{(\roman*)},leftmargin=2.4em]
  \item exactly two critical points, both simple;
  \item exactly two critical points, of total multiplicity $3$;
  \item exactly three critical points, all simple.
\end{enumerate}
Cases \textup{(ii)} and \textup{(iii)} are impossible because each gives
an odd number of odd-order zeros of $h'$.

It remains to exclude case \textup{(i)}.  The two critical points are the
unique global minimum and maximum of $h$.  First suppose that $p$ is the
minimum.  Then
\begin{equation}\label{eq:Phi-above}
  \Phi>c_0\qquad\text{on }O\setminus\{p\}.
\end{equation}
The punctured circle $O\setminus\{p\}$ is connected, so the separation
lemma implies that it is contained entirely in $L$ or entirely in
$\mathcal R$.

Because $p$ is a simple critical point of $h$ and a local minimum,
\begin{equation}\label{eq:positive-tangent}
  v_0^{\mathsf T}Hv_0>0.
\end{equation}
The positive cone of the quadratic form
$H=\operatorname{diag}(+,-)$ contains no vertical vector.  Thus the
$f$-component of $v_0$ is nonzero.  The two ends of the arc
$O\setminus\{p\}$ approach $p$ with opposite tangent directions $v_0$
and $-v_0$, and therefore with opposite signs of $f-f_0$.  One end lies
in $L$ and the other in $\mathcal R$, a contradiction.

If $p$ is the maximum, the same argument uses the two components of
$\{\Phi<c_0\}$ separated by the line $g=g_0$; in this case the negative
cone of $H$ contains no horizontal vector.  Hence case
\textup{(i)} is also impossible, and the total multiplicity is at least
$4$.
\end{proof}

At a smooth point of $\Gamma$, the local intersection multiplicity of
$\{Q=0\}$ and $\{R=0\}$ equals the vanishing order of $R$ along $O$.
Lemmas~\ref{lem:oval} and \ref{lem:four-contacts} therefore prove
Proposition~\ref{prop:four-solutions}.

\section{Genericity and completion of the proof}\label{sec:genericity}

We now verify that the hypotheses of
Proposition~\ref{prop:four-solutions} hold on a dense open set of
parameters.  Put
\begin{equation}\label{eq:theta}
  \theta=(a,b,\zeta_1,\zeta_2,\alpha).
\end{equation}
After multiplication by harmless powers of $\alpha+1$, the coefficients
of $Q$ and $R$ are polynomial functions of $\theta$.

\begin{proposition}\label{prop:genericity}
There is a nonempty Zariski-open subset $U$ of the complex parameter space
with the following properties:
\begin{enumerate}[label=\textup{(G\arabic*)},leftmargin=3.2em]
  \item\label{G1}
  for every $\theta\in U$, the system $Q=R=0$ has finitely many
  solutions in $(\C^*)^2$;
  \item\label{G2}
  for every admissible real $\theta\in U$, the curve $\Gamma$ is
  nonsingular in $(\R^*)^2$.
\end{enumerate}
Moreover, $U$ meets the admissible real parameter set
$b\neq0$, $\zeta_1,\zeta_2,\alpha>0$ in a Euclidean-dense subset.
\end{proposition}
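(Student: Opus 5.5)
The plan is the standard ``closed condition, nonempty complement'' argument for each property, followed by intersecting the two open sets. The coefficients of $Q$ and $R$, after clearing powers of $\alpha+1$, are polynomials in $\theta$. So each failure condition will be encoded as the vanishing of finitely many polynomials in $\theta$, i.e.\ as a Zariski-closed set $Z_1$ or $Z_2$. We then take $U=\C^5\setminus(Z_1\cup Z_2)$. It remains to show that each $Z_i$ is proper. Once that is done, density of $U$ among admissible real parameters is automatic. The admissible set is a nonempty open subset of $\R^5$, and a proper real algebraic subset of $\R^5$ (the real points of $Z_1\cup Z_2$) has empty interior. Hence its complement in any nonempty open set is dense.

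For \ref{G1}, finiteness fails exactly when $Q$ and $R$ share a common factor that vanishes somewhere in $(\C^*)^2$. Equivalently, the resultant $\Res_g(Q,R)\in\C[\theta][f]$ vanishes identically in $f$, after discarding monomial factors in $f$ and $g$. The coefficients of this resultant are polynomials in $\theta$, and $Z_1$ is their common zero set. To see that $Z_1\neq\C^5$, I will exhibit a single parameter value at which $Q=R=0$ has finitely many solutions in the torus. A natural choice is a simple rational configuration such as $a=1/2$, $b=1$, $\zeta_1=\zeta_2=1$, $\alpha=1$. There the BKK bound \eqref{eq:BKK} can be checked to be attained with isolated roots, or the resultant computed directly and found to be nonzero. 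Since everything is polynomial, a single witness suffices.

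For \ref{G2}, singularity of $\Gamma$ in the torus is the condition $Q=Q_f=Q_g=0$ at some point of $(\C^*)^2$. I will eliminate $(f,g)$ from these three equations to get a discriminant-type polynomial $D(\theta)$. One route is the sparse ($A$-)discriminant of the family, whose support is contained in $\NP(Q)$. Another is iterated resultants, saturated by $fg$. If $D\not\equiv0$, then $D\neq0$ forces smoothness of $\Gamma$ on $(\C^*)^2$, and hence on $(\R^*)^2$. Again a single witness parameter at which $\Gamma$ is smooth in the torus suffices.

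The main obstacle is producing this witness in a verifiable way. The structural identity \eqref{eq:GNS-identity} controls only the behaviour at infinity and on the axes, not interior singularities. My first attempt would be an exact computer-algebra check at a rational parameter: verify that the ideal $(Q,Q_f,Q_g,1-tfg)$ is the unit ideal via a Gröbner basis. If a conceptual argument is preferred instead, I would try a degeneration, for example $\zeta_1\to0$ or $\alpha\to0$, where $Q$ may factor into visibly transversal pieces whose nodes can be tracked. Nonsingularity is not preserved across such a degeneration, though, so the exact certificate is the safer primary route.
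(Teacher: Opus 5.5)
The plan for (G2) cannot work, because the witness you are looking for does not exist. Your outline for (G1) and for the density statement is sound.

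For every admissible $\theta$, and indeed for all complex $\theta$ off a proper Zariski-closed set, the curve $\Gamma$ is singular at two points of $(\C^*)^2$. These are the common zeros of $\xi_1,\xi_2,\xi_3$ already mentioned in Section~\ref{sec:reduction}. Explicitly, for $w=a\pm ib$ all three forms vanish at $(f,g)=\bigl(-w\zeta_1/((w-1)\zeta_2),\,\zeta_1/(w-1)\bigr)$, and $fg\neq0$ there.

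Write $\eta_1=\omega_1+\dd\xi_2/\xi_2-\dd\xi_1/\xi_1$ and $\eta_2=\omega_2+\dd\xi_3/\xi_3-\dd\xi_2/\xi_2$, with $\omega_1,\omega_2$ regular at these points. Every term of $fg\,\xi_1\xi_2\xi_3\,\eta_1\wedge\eta_2$ then vanishes to order at least $2$ there, except $fg\,(\xi_1\,\dd\xi_2\wedge\dd\xi_3-\xi_2\,\dd\xi_1\wedge\dd\xi_3+\xi_3\,\dd\xi_1\wedge\dd\xi_2)$. The linear part of that term is a multiple of $\ell_1(\ell_2\wedge\ell_3)-\ell_2(\ell_1\wedge\ell_3)+\ell_3(\ell_1\wedge\ell_2)$, with $\ell_i=\dd\xi_i$ at the point. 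This expression vanishes identically for any three covectors in the plane, so $Q$ vanishes to second order at both points.

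Consequently your $D(\theta)$ is identically zero, whether you build it as a sparse discriminant or as an iterated resultant saturated by $fg$. The ideal $(Q,Q_f,Q_g,1-tfg)$ is never the unit ideal, your $Z_2$ is all of $\C^5$, and your $U$ is empty. Smoothness of $\Gamma$ on $(\C^*)^2$ is false, which is exactly why (G2) asserts smoothness only on $(\R^*)^2$ and only for real admissible $\theta$.

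What is missing is a Zariski-open condition that tolerates exactly these structural singularities, combined with reality. The structural points satisfy $\xi_3(f)=0$, and $\xi_3>0$ on $\R$ for admissible parameters. The paper proceeds as follows:
\begin{itemize}
\item It sets $r_2=\Res_g(Q,Q_g)$ and $r_4=\Res_g(Q_f,Q_g)$ and notes that $f\xi_3(f)$ always divides $\gcd(r_2,r_4)$.
\item At a rational witness it checks that the degrees are $24$ and $21$ and that the gcd equals $f\xi_3(f)$.
\item It uses the subresultant criterion, with the degrees fixed, to propagate this to a Zariski-open set.
\end{itemize}
On that set the $f$-coordinate of any singular point in the torus is $0$ or a nonreal root of $\xi_3$, so there are no real singular points in $(\R^*)^2$. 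Be aware that merely saturating your ideal by $\xi_3(f)$ would give a condition on $\theta$ that is a priori only constructible. A single witness would then not suffice without an openness argument of this kind.

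A smaller point for (G1): $\Res_g(Q,R)\not\equiv0$ does not exclude a common factor depending only on $f$, which would still give infinitely many torus solutions. Either require $\Res_f(Q,R)\not\equiv0$ as well, or use directly, as the paper does, that pairs with a nonconstant common factor form a Zariski-closed set.
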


\begin{proof}
For \ref{G1}, note that, for the fixed supports of $Q$ and $R$, the pairs
having a nonconstant common factor form a Zariski-closed subset of the
coefficient space.  Indeed, this set is the finite union, over the possible
bidegrees of a common factor, of the projective images of the corresponding
multiplication maps.  At
\begin{equation}\label{eq:witness}
  \theta_*
  =\left(\frac{3}{10},\frac{7}{5},\frac{7}{10},
          \frac{14}{5},\frac12\right),
\end{equation}
an exact computation over $\Qfield$ gives $\gcd(Q,R)=1$ in
$\Qfield[f,g]$.  Thus \ref{G1} holds on a nonempty Zariski-open set.

For \ref{G2}, set
\begin{equation}\label{eq:r2r4}
  r_2(f)=\Res_g(Q,Q_g),
  \qquad
  r_4(f)=\Res_g(Q_f,Q_g).
\end{equation}
At a singular point of $\Gamma$ in $(\C^*)^2$, the $f$-coordinate is a
common zero of $r_2$ and $r_4$.  The factor $f$ is a structural common
factor, coming from the isolated zero at the origin and the support
condition \eqref{eq:NPQ}.  At each of the two common zeros of $\xi_1,\xi_2,\xi_3$ recalled in
Section~\ref{sec:reduction}, the curve $\Gamma$ is singular; this is part
of the multiplicity-six analysis in \cite{GNS}.  The two corresponding
$f$-coordinates are precisely the roots of $\xi_3$.  Hence $\xi_3$ is
also a structural common factor.  Thus
\begin{equation}\label{eq:structural-gcd}
  f\xi_3(f)\mid\gcd(r_2,r_4).
\end{equation}
At the witness \eqref{eq:witness}, exact arithmetic gives
\begin{equation}\label{eq:witness-degrees}
  \deg_f r_2=24,
  \qquad
  \deg_f r_4=21,
  \qquad
  \gcd(r_2,r_4)=f\xi_3(f)
\end{equation}
up to a nonzero constant.  The leading coefficients responsible for the
two displayed degrees are nonzero at the witness.  Hence these degrees
remain constant on a Zariski-open neighborhood of the witness.
The subresultant criterion, together with
\eqref{eq:structural-gcd}, then shows that
\begin{equation}\label{eq:generic-gcd}
  \gcd(r_2,r_4)=f\xi_3(f)
\end{equation}
throughout a nonempty Zariski-open set.

For admissible real parameters, $\xi_3(f)>0$ for every real $f$, and a
point of $(\R^*)^2$ has $f\neq0$.  Thus \eqref{eq:generic-gcd} rules out
real singular points of $\Gamma$ in the torus, proving \ref{G2}.  Since
the witness \eqref{eq:witness} is admissible and the complement of $U$ is
a proper algebraic subset, $U$ intersects the admissible real parameter
set in a Euclidean-dense subset.
\end{proof}

The exact calculations at \eqref{eq:witness} are short and use only the
definitions \eqref{eq:xi1}--\eqref{eq:def-R}.  A reproducible
exact-arithmetic script, \texttt{verify\_genericity.py}, accompanies the
source of this article.

\begin{proof}[Proof of Theorem~\ref{thm:main}]
For parameters in the dense open set of
Proposition~\ref{prop:genericity}, Proposition~\ref{prop:four-solutions}
applies in every open quadrant.  The argument in
\eqref{eq:new-N4}--\eqref{eq:generic-six} therefore gives the bound $6$
for generic noncollinear configurations.

Now let an arbitrary admissible potential have $k$ nondegenerate
equilibrium points.  By the implicit function theorem, all $k$ points
persist under sufficiently small perturbations of the positions, the
charges, and, if necessary, $\alpha$.  The perturbed parameters may be
chosen noncollinear and in the dense open set of
Proposition~\ref{prop:genericity}, while preserving positivity of the
charges and the inequality $\alpha>0$.  The perturbed
configuration has at least $k$ equilibrium points and at most $6$, so
$k\leq6$.  Finally, write a point as $x=y+z$, where $y$ belongs to the affine
span of the charges and $z$ is orthogonal to it.  The normal component of
$\nabla V_\alpha(x)$ is a nonzero scalar multiple of
$z\sum_i\zeta_i\rho_i^{-\alpha-1}$; hence it vanishes only when $z=0$.
Thus every equilibrium point lies in the affine span of the charges, and
the problem reduces to dimension at most $2$.
\end{proof}

The proof is independent of numerical experimentation.
Figure~\ref{fig:four-equilibria} illustrates the positive-quadrant
geometry but is not used in the argument.

\section{Sharpness of the method and computational checks}\label{sec:checks}

\begin{figure}[!t]
  \centering
  \includegraphics[width=0.66\textwidth]{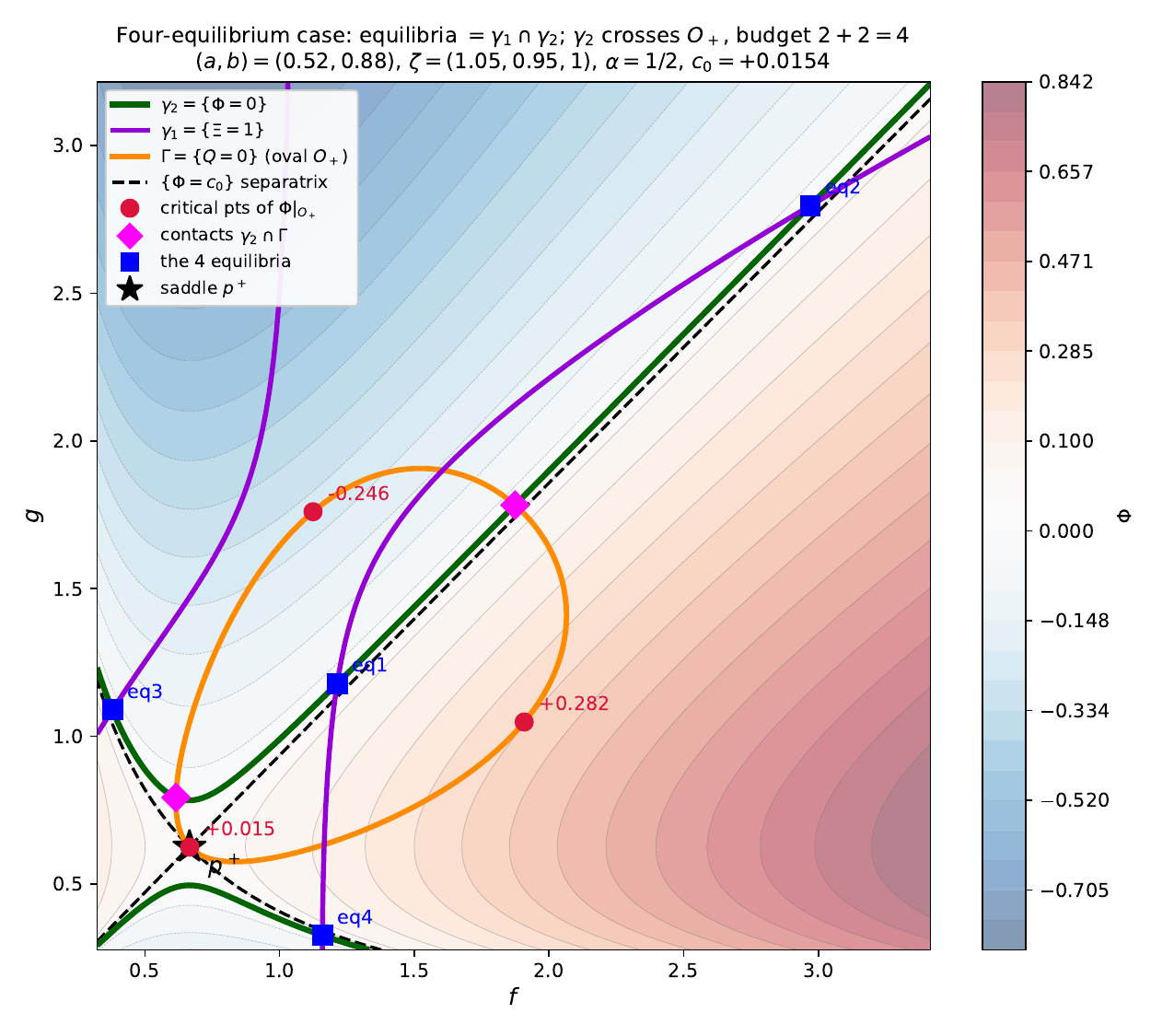}
  \caption{A positive-quadrant configuration with four equilibria.  The
  orange curve is the oval $O^+\subset\Gamma$; the green and purple curves
  are $\gamma_2$ and $\gamma_1$, respectively; and the dashed curve is the
  separatrix level $\Phi=c_0$.  Red points mark critical points of
  $\restr{\Phi}{O^+}$, magenta diamonds mark intersections
  $\gamma_2\cap\Gamma$, blue squares mark the four equilibria
  $\gamma_1\cap\gamma_2$, and the black star marks the saddle $p^+$.}
  \label{fig:four-equilibria}
\end{figure}
\FloatBarrier

We record the following computational checks because they clarify where
the two-step Rolle--Khovanskii method loses information.  Exact rational constructions
of $Q$ and $R$, exact resultants, high-precision
root isolation, and independent tracing of the real ovals were
performed for ten parameter sets.  These examples cover
$\alpha\in[1/20,20]$, flat, tall, and obtuse charge triangles, and charge
ratios up to $100:1$.  In every tested example, the unique zero of
$\eta_2$ in each quadrant is a nondegenerate saddle, and $Q=R=0$ has
exactly four real solutions in each open quadrant.  Thus there are
$16$ real torus solutions.  In the same examples the mixed-volume count
is saturated:
\begin{equation}\label{eq:saturation}
  16+2\cdot6=28.
\end{equation}
The relevant Newton polygon areas are $12$ and $15$, while the area of
their Minkowski sum is $55$, giving the mixed volume $28$.

The remaining slack lies in the Rolle--Khovanskii inequalities.  The
positive-quadrant oval $O^+$ carries four critical points of
$\restr{\Phi}{O^+}$.  Depending on the level defining $\gamma_2$, the
curve $\gamma_2$ meets $O^+$ in $0$, $2$, or $4$ points.

All steps in the count are generically sharp except for the combined
Rolle estimates
\begin{equation}\label{eq:remaining-slack}
  N\leq2+\#(\gamma_2\cap\Gamma\cap\Qpos).
\end{equation}
When the level of $\gamma_2$ lies between the two middle critical values
of $\restr{\Phi}{O^+}$, the curve has four crossings with the oval.  To
reach Maxwell's conjectural bound $4$ by this route, one would have to
show that in precisely this situation the first Rolle step overcounts by
at least $2$.

Combining Theorem~\ref{thm:main} with \eqref{eq:parity} gives
$m_0\leq2$.  Hence the number of nondegenerate equilibria is $2$, $4$, or
$6$.  An improvement to $m_0\leq1$ would be equivalent to Maxwell's
conjectured bound for three charges.

\section{Final remarks}

\noindent
{\bf 1.} Edelsbrunner, Fillmore, and Oliveira \cite{Edel} found a counterexample
to Conjecture~1.8 of \cite{GNS}, which asserted that, for every
$\alpha>0$, the number of nondegenerate equilibrium points of a finite
configuration of positive charges does not exceed the number of its
effective Voronoi cells.  One of the main results of \cite{GNS} proves
this assertion for every fixed configuration and all sufficiently large
$\alpha$.  Thus it remains possible that some configuration of three
charges has exactly six equilibrium points for special positive values of
$\alpha$.

\noindent
{\bf 2.} In view of a recent counterexample \cite{ABK} the question of finding  new conjectures/results for the maximal number of points of equilibrium for configurations of point charges has resurfaced anew. 

\section*{Acknowledgments}

Parts of this work, including the saddle-separation argument and the symbolic and numerical verification, were developed with the assistance of Claude (Anthropic). All results were independently verified by the authors. 

This project  was mainly carried out while D.~Novikov was visiting the
Institute for Advanced Study.  He thanks the Institute for its 
hospitality and excellent working conditions.  D.~Novikov was supported
by the Kovner Member Fund, Israel Science Foundation grant 1167/17, and
Minerva grant 714141.

\end{document}